\font\smallit=cmti10
\font\smalltt=cmtt10
\renewcommand\section{\@startsection {section}{1}{\z@}
{-30pt \@plus -1ex \@minus -.2ex}
{2.3ex \@plus.2ex}
{\normalfont\normalsize\bfseries\boldmath}}
\renewcommand\subsection{\@startsection{subsection}{2}{\z@}
{-3.25ex\@plus -1ex \@minus -.2ex}
{1.5ex \@plus .2ex}
{\normalfont\normalsize\bfseries\boldmath}}
\renewcommand{\@seccntformat}[1]{\csname the#1\endcsname. }
\newtheorem{theorem}{Theorem}
\newtheorem{proposition}{Proposition}
\theoremstyle{definition}
\newtheorem{definition}{Definition}
\newtheorem{conjecture}{Conjecture}
\newtheorem{example}{Example}
\begin{document}

\begin{center}
\uppercase{\bf \boldmath A binary version of the Mahler--Popken complexity function}
\vskip 20pt
{\bf John M. Campbell}\\
{\smallit Department of Mathematics and Statistics, Dalhousie University, Halifax, Nova Scotia, Canada}\\
{\tt jmaxwellcampbell@gmail.com}\\ 
\end{center}
\vskip 20pt
\centerline{\smallit Received: 05/04/24, Revised: 08/08/24, Accepted: 09/18/24, Published: --/--/--} % We will fill in the dates
\vskip 30pt

\centerline{\bf Abstract}
\noindent
 The (Mahler--Popken) \emph{complexity} $\| n \|$ of a natural number $n$ is the smallest number of ones that can be used via combinations of multiplication 
 and addition to express $n$, with parentheses arranged in such a way so as to form legal nestings. We generalize $\| \cdot \|$ by defining $\| n \|_{m}$ 
 as the smallest number of possibly repeated selections from $\{ 1, 2, \ldots, m \}$ (counting repetitions), for fixed $m \in \mathbb{N}$, that can be used to 
 express $n$ with the same operational and bracket symbols as before. There is a close relationship, as we explore, between $\|\cdot\|_{2}$ and lengths of 
 shortest addition chains for a given natural number. This illustrates how remarkable it is that $(\| n \|_{2} : n \in \mathbb{N} )$ is not currently included in the 
 On-Line Encyclopedia of Integer Sequences and has, apparently, not been studied previously. This, in turn, motivates our exploration of the complexity 
 function $\| \cdot\|_{2}$, in which we prove explicit upper and lower bounds for $\|\cdot\|_{2}$ and describe some problems and further areas of research 
 concerning $\|\cdot\|_{2}$. 

\pagestyle{myheadings}
\markright{\smalltt INTEGERS: 24 (2024)\hfill}
\thispagestyle{empty}
\baselineskip=12.875pt
\vskip 30pt

\section{Introduction}\label{sectionIntroduction}
 The notion of \emph{integer 
 complexity} was given implicitly by Mahler and Popken in 1953 \cite{MahlerPopken1953}. Subsequent to Guy's work concerning the complexity of integers 
 \cite[\S F26]{Guy1994} \cite{Guy1986}, notable advances in the study of the integer complexity function are due Altman et al.\ 
 \cite{AltmanPhD,Altman2018,Altman2015,Altman2016,Altman2019,AltmanZelinsky2012}. These past references have inspired our explorations based on 
 generalizing the concept of the complexity of an integer in a naturally number-theoretic or combinatorial way. In this paper, we introduce a family of 
 generalizations of the Mahler--Popken complexity function that may be thought of as providing an analogue of integer partitions of bounded width. In 
 particular, the ``binary'' case of our generalizations, as described below, provides a natural next step forward from the definition of integer complexity, and 
 is also closely related to what is meant by the length of a shortest \emph{addition chain}. 

 For a natural number $n$, the integer complexity function $ \|\cdot\|$ may be defined so that $ \| n \|$ is equal to the least number of ones required to 
 express $n$ with any combination of multiplication, addition, and bracket symbols, provided that any brackets are grouped in legal nestings. 

\begin{example}
 We find that $\| 6 \| = 5$, writing $6 = (1+1) \times (1+1+1)$. 
\end{example}

 Fundamental properties concerning the function $ \|\cdot\|$ include the inequalities 
\begin{equation}\label{originalinequalities}
 \| n + m \| \leq \| n \| + \| m \| 
 \ \ \ \text{and} \ \ \ \| n \times m \| \leq \| n \| + \| m \|, 
\end{equation}
 along with the explicit bounds whereby 
\begin{equation}\label{boundsoriginal}
 3 \log_{3}(n) \leq \| n \| \leq 3 \log_{2}(n) 
\end{equation}
 for $n > 1$. If we consider how $ \|\cdot\|$ may be thought of as encoding how ``complicated'' a natural number is in terms of the given constraints for 
 expressing natural numbers with the specified symbols, this lends itself toward how related notions of complexity could arise in the study of integer partitions 
 with entries other than $1$ allowed. In this regard, Mahler and Popken \cite{MahlerPopken1953} considered the inverse mapping associated with the 
 size of the largest number expressible using a fixed number of copies of a real number $x$ and with the same operations as for $\| \cdot \|$. 
 This leads us to consider new complexity functions that allow us to express a given natural number using more than one number, 
 in contrast to $\| \cdot \|$ and to the work of Mahler and Popken \cite{MahlerPopken1953}. 

\begin{definition}\label{definitionmary}
 We define the \emph{$m$-ary complexity function} $\|\cdot\|_{m}$ so that $\| n \|_{m}$, for a natural number $n$, equals the smallest number of possibly 
 repeated selections from $\{ 1, 2, \ldots, m \}$ (counting repetitions) that can be used to express $n$ with the same operational and bracket symbols 
 involved in our definition of $ \|\cdot\|$, and again subject to legal nestings. 
\end{definition}

\begin{example}
 The $m = 1$ case of Definition \ref{definitionmary}
 is such that $ \|\cdot\|_{1} = \|\cdot\|$. 
\end{example}

 In addition to the work of Guy \cite[\S F26]{Guy1994} \cite{Guy1986} and by Altman et al.\ \cite{AltmanPhD,Altman2018,Altman2015,
Altman2016,Altman2019,AltmanZelinsky2012}, for further research that concerns the integer complexity function and that motivates our generalization 
 $ \|\cdot\|_{m}$ of $\|\cdot\|$, see \cite{AriasdeReyna2000,AriasdeReyna2024,CernenoksIraidsOpmanisOpmanisPodnieks2015,
CordwellEpsteinHemmadyMillerPalssonSharmaSteinerbergerVu2019,Rawsthorne1989,Steinerberger2014}. 

\section{Addition Chains and Binary Complexity}\label{sectionAddition}
 It is well known that addition chains, as defined below, may be seen as providing something of a secondary version of complexity \cite[p.\ 2]{AltmanPhD}. 
 Since the function $\|\cdot\|_{2}$ may similarly be seen as a secondary version of integer complexity, this leads us to consider how the integer sequence $ 
 (\| n \|_{2} : n \in \mathbb{N} )$ relates to known properties associated with addition chains. 

\begin{definition}
 An \emph{addition chain} for $n \in \mathbb{N}$ 
 is a tuple $(a_{0}, a_{1}, \ldots, a_{r})$ satisfying $a_{0} = 1$ and $a_{r} = n$ and such that: For all $k \in \{ 1, 2, \ldots, 
 r \}$, there exist indices $i, j \in \{ 0, 1, \ldots, k-1 \}$ whereby $a_{k} = a_{i} + a_{j}$. 
\end{definition}

\begin{example}\label{shortest20}
 We may verify that a shortest addition chain ending with $20$ is 
\begin{equation}\label{displaychain}
 (1, 2, 4, 5, 10, 20). 
\end{equation}
 Informally, we can think of this shortest addition chain as being in correspondence with the binary complexity given by the decomposition 
\begin{equation}\label{suggestcorrespond2}
 (2 \times 2 + 1) \times 2 \times 2. 
\end{equation}
 Informally, the natural numbers we obtain from successive subsequences of natural numbers and binary operations, read from left to right in 
 \eqref{suggestcorrespond2}, are $2$, $4$, $5$, $10$, and $20$, and this agrees with the sequence integers in the tuple in \eqref{displaychain}. 
\end{example}

\begin{definition}\label{definitionlength}
 The \emph{length} of an addition chain is equal to $-1$ plus the number of its entries. 
 We let $\ell(n)$ denote the length of a shortest addition chain ending with $n$. 
\end{definition}

\begin{example}
 The length of the addition chain in \eqref{displaychain} is $5$, and this agrees with the value of $\| 20 \|_{2}$, with regard to the correspondence 
 suggested between \eqref{displaychain} and \eqref{suggestcorrespond2}. 
\end{example}

 The entry in the On-Line Encyclopedia of Integer Sequences (OEIS) \cite{oeis} providing the length of a shortest addition chain is indexed as {\tt A003313} and is of 
 particular interest for our purposes, due to how closely this sequence relates to $(\| n \|_{2} : n \in \mathbb{N} )$. In this regard, the first point of 
 disagreement between $(\| n \|_{2} : n \in \mathbb{N}_{\geq 2} )$ and 
 $( \text{{\tt A003313}}(n) : n \in \mathbb{N}_{\geq 2} ) = (\ell(n) : n \in \mathbb{N}_{\geq 2})$ 
 is illustrated below and occurs at the $n = 23$ point: 
\begin{align*}
 & (\| n \|_{2} : n \in \mathbb{N}_{\geq 2} ) = \\
 & (1, 2, 2, 3, 3, 4, 3, 4, 4, 5, 4, 5, 5, 5, 4, 5, 5, 6, 5, 6, 6, 7, \ldots), \\
 & ( \ell(n) : n \in \mathbb{N}_{\geq 2} ) = \\
 & (1, 2, 2, 3, 3, 4, 3, 4, 4, 5, 4, 5, 5, 5, 4, 5, 5, 6, 5, 6, 6, 6, \ldots).
\end{align*}

\begin{example}
 The point of disagreement corresponding to the value $\| 23 \|_{2} = 7$ indicated above may be illustrated via the decomposition 
\begin{equation}\label{decomposition23}
 ((2 \times 2 + 1) \times 2 + 1) \times 2 + 1 = 23 
\end{equation}
 or the decomposition 
\begin{equation}\label{another23decomposition}
 (2 \times 2 + 1) \times 2 \times 2 + 1 + 2 = 23, 
\end{equation}
 or equivalent decompositions such as $ ((2 + 2 + 1) \times 2 + 1) \times 2 + 1 = 23 $ and $ (2 + 2 + 1) \times 2 \times 2 + 1 + 2 = 23$, and a brute 
 force search may be applied to verify that $23$ cannot be expressed with fewer than $7$ possibly repeated copies of elements from $\{ 1, 2 \}$, subject 
 to the specified constraints in the definition of $\|\cdot\|_{2}$. In contrast to Equations 
 \eqref{decomposition23} and \eqref{another23decomposition}, a shortest 
 addition chain for $23$ is $ (1, 2, 3, 5, 10, 20, 23)$, with a length of $6$ according to Definition \ref{definitionlength}, so 
 that $\ell(23) = 6$. 
\end{example}

 Successive arguments $n$ such that $\| n \|_{2}$ is not equal to the length of a shortest addition chain for $n$ include 
\begin{equation}\label{notequallength}
 23, 43, 46, 59, 77, 83, \ldots 
\end{equation}
 and the above subsequence is not currently in the OEIS,  which suggests that our complexity function $\|\cdot\|_{2}$ is new and has not been  
 considered in relation to addition chains previously.  Is it true that $\ell(n) \leq \| n \|_{2}$ for all $n \geq 2$?  If so, what can be said in regard to the 
 possible sizes of differences of the form $\ell(n) - \| n \|_{2}$? Further open problems are given in the Conclusion section of this paper. 

 One of the most basic formulas for the integer complexity function is such that 
\begin{equation}\label{recursionoriginal}
 \| n \| = \min_{\substack{ d \mid n \\ m < n }} 
 \left\{ \| d \| + \left\| \frac{n}{d} \right\|, \| m \| + \| n - m \| \right\}, 
\end{equation}
 and this provides us with a recursion if we impose the conditions whereby the divisors $d$ involved in 
 Equation \eqref{recursionoriginal} are such that $d \neq 1$ 
 and $d \neq n$, letting it be understood that $m > 0$. As suggested by Steinerberger \cite{Steinerberger2014}, this recursion may be seen as 
 providing a notable instance whereby an easily stated number theory problem involving combinations of addition and multiplication to express natural 
 numbers can lead to computational difficulties. We may generalize 
 Equation \eqref{recursionoriginal} using the family of functions of the form $\|\cdot\|_{m}$ 
 considered in this paper, as below. 

\begin{proposition}
 For fixed $m \in \mathbb{N}$, the recursion 
\begin{equation}\label{recusivegeneralization}
 \| n \|_{m} = \min_{\substack{ d \mid n \\ \eta < n }} 
 \left\{ \| d \|_{m} + \left\| \frac{n}{d} \right\|_{m}, \| \eta \|_{m} + \| n - \eta \|_{m} \right\} 
\end{equation}
  holds for all sufficiently large $n$. In particular, for the binary case  with $m = 2$, the recursive relation in Equation \eqref{recusivegeneralization} 
  holds     for $n \geq 3$.  
\end{proposition}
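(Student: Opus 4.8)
The plan is to establish the two inequalities between $\|n\|_{m}$ and the right-hand side of \eqref{recusivegeneralization} separately, and then to identify the exact threshold on $n$. The $\leq$ direction requires nothing about the size of $n$ and follows from the immediate analogue of the inequalities in \eqref{originalinequalities} for $\|\cdot\|_{m}$: if $E_{1}$ expresses $a$ using $k_{1}$ selections from $\{1,2,\ldots,m\}$ and $E_{2}$ expresses $b$ using $k_{2}$ such selections, then $(E_{1})+(E_{2})$ and $(E_{1})\times(E_{2})$ express $a+b$ and $ab$, each using $k_{1}+k_{2}$ selections, so $\|a+b\|_{m}\leq\|a\|_{m}+\|b\|_{m}$ and $\|ab\|_{m}\leq\|a\|_{m}+\|b\|_{m}$. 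Applying the multiplicative bound with $a=d$, $b=n/d$ for each divisor $d$ of $n$, and the additive bound with $a=\eta$, $b=n-\eta$ for each $\eta$ with $1\leq\eta<n$, shows that $\|n\|_{m}$ is at most every quantity appearing inside the minimum in \eqref{recusivegeneralization}, hence at most that minimum.

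For the reverse inequality I would fix an expression $E$ for $n$ using exactly $\|n\|_{m}$ selections and regard it as a rooted binary tree whose leaves carry labels in $\{1,2,\ldots,m\}$ and whose internal nodes carry labels in $\{+,\times\}$, so that the number of leaves equals $\|n\|_{m}$. If $n>m$, then $n$ is not itself an element of $\{1,2,\ldots,m\}$, so the root of $E$ cannot be a leaf; it is therefore an internal node, and the two maximal subtrees hanging from it express values $a$ and $b$ with $a+b=n$ (root labelled $+$) or $ab=n$ (root labelled $\times$), while the sum of their leaf counts equals $\|n\|_{m}$ and is at least $\|a\|_{m}+\|b\|_{m}$. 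In the multiplicative case $a$ is a divisor of $n$, giving $\|n\|_{m}\geq\|a\|_{m}+\|n/a\|_{m}$; in the additive case $a$ and $b$ are positive, so $1\leq a<n$ and $\|n\|_{m}\geq\|a\|_{m}+\|n-a\|_{m}$. In either case $\|n\|_{m}$ dominates one of the terms in the minimum in \eqref{recusivegeneralization}, hence dominates the minimum, and equality follows for every $n>m$. Taking $m=2$ yields precisely the claimed range $n\geq 3$.

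I do not anticipate a genuine obstacle; the only points needing care in the reverse inequality are the bookkeeping identifying ``number of selections'' with ``number of leaves'' of the parse tree, and the remark that the degenerate factorizations $d\in\{1,n\}$ — which are allowed in the minimum but are never strict minimizers, and which are exactly what one excludes to read \eqref{recusivegeneralization} as a bona fide recursion — do not affect the argument. I would also note that ``sufficiently large'' can be made explicit as $n\geq m+1$, and that this threshold is sharp: for $n\leq m$ we have $\|n\|_{m}=1$, whereas every term in the minimum in \eqref{recusivegeneralization} is a sum of two complexities and so is at least $2$, so the recursion genuinely fails there (in particular at $n=1,2$ when $m=2$).
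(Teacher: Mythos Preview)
Your argument is correct and is precisely the standard parse-tree argument the paper invokes: the paper does not spell out a proof of the Proposition but simply states that it ``may be proved (for sufficiently large $n$) using the same line of reasoning as in known proofs of Equation \eqref{recursionoriginal}'' and cites Arias de Reyna. Your write-up supplies exactly those details, and in addition makes the threshold $n\geq m+1$ explicit and shows it to be sharp, which the paper leaves implicit.
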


  For a given $m \in \mathbb{N}$, the recursion in Equation \eqref{recusivegeneralization} may be proved (for sufficiently large $n$)  using the same line of  
  reasoning as in   known proofs of Equation \eqref{recursionoriginal} as in \cite{AriasdeReyna2024}.   Computations obtained from the $m = 2$ case of  
  Equation   \eqref{recusivegeneralization} are shown below. Observe that the analogues of \eqref{originalinequalities} whereby  
\begin{equation}\label{basicinequalitiesbinary}
 \| n + m \|_{2} \leq \| n \|_{2} + \| m \|_{2} 
 \ \ \ \text{and} \ \ \ \| n \times m \|_{2} \leq \| n \|_{2} + \| m \|_{2} 
\end{equation}
 follow in a direct way from the $m = 2$ case of Equation \eqref{recusivegeneralization}. 

 The $m = 2$ case of Equation \eqref{recusivegeneralization} provides a practical way of computing higher-order values of $\|\cdot\|_{2}$. To avoid possible 
 disagreements concerning offsets of initial values of OEIS sequences, by computing $\| n \|_{2}$ starting with, say, $n \geq 5$, we obtain 
\begin{align*}
 & ( \| n \|_{2} : n \in \mathbb{N}_{\geq 5} ) = \\
 & (3, 3, 4, 3, 4, 4, 5, 4, 5, 5, 5, 4, 5, 5, 6, 5, 6, 6, 7, 5, 6, 6, 6, \\
 & 6, 7, 6, 7, 5, 6, 6, 7, 6, 7, 7, 7, 6, 7, 7, 8, 7, 7, 8, 8, \ldots)
\end{align*}
 and, remarkably, no sequences currently included in the OEIS agree with the above data. This again suggests that our binary complexity function 
 $\|\cdot\|_{2}$ has not previously been considered, motivating our explorations in Section \ref{subsectionExplicit} below. 

 The last entry displayed in the above output associated with $\|\cdot\|_{2}$ 
 corresponds to the argument $47$. Remarkably, if we were to instead input
 $( \| 2 \|_{2}, \| 3 \|_{2}, \ldots, \| 46 \|_{2} )$ into the OEIS, 
 we find an agreement with the OEIS sequences {\tt A117497} and {\tt A117498}, 
 with a disagreement with $\| n \|_{2}$ occurring for the $n = 47$ argument. 

\begin{example}
 We may verify that $\| 47 \| = 8$, by verifying that $ (2 \times 2 + 1) \times (2 + 1) \times (2+1) + 2 = 47 $ and $ (2 + 2 + 1) \times (2 + 1) \times (2 
 + 1) + 2 = 47 $ provide minimal combinations of ones and twos to express $47$, again according to the restrictions associated with the complexity 
 function $\|\cdot\|_{2}$. As indicated above, this provides the first point of disagreement with {\tt A117497} and the first point of disagreement 
 with {\tt A117498}. 
\end{example}

 The sequence {\tt A117497} is defined by analogy with lengths of shortest addition chains, and {\tt A117498} provides the order of an optimized 
 combination of binary and factor methods for producing an addition chain, referring to the OEIS for details. This again reflects the close connection 
 between $\|\cdot\|_{2}$ and shortest addition chains, and illustrates the research interest in the new complexity function $\|\cdot\|_{2}$ that is the 
 subject of this paper. 

\subsection{Explicit Bounds for the Binary Complexity Function}\label{subsectionExplicit}
 While our below results may be generalized to our new family of complexity functions of the form $\|\cdot\|_{m}$ for $m \in \mathbb{N}_{\geq 2}$, we 
 focus on the $m = 2$ case for reasons suggested above. To introduce and prove lower and upper bounds for $\|\cdot\|_{2}$, we modify an approach 
 described by Arias de Reyna \cite{AriasdeReyna2024} (cf.\ \cite{AriasdeReyna2000}). Observe that a lower bound for $\| \cdot \|$ would not 
 necessarily provide a lower bound for $\| \cdot \|_{2}$, and an optimal upper bound for $\| \cdot \|_{2}$ would not provide an upper bound for $\| 
 \cdot \|$, as illustrated via the respective behaviors of $\| \cdot \|$ and $\| \cdot \|_{2}$ shown in Figure \ref{Figure1}. 

\begin{figure}
\begin{center}
\includegraphics[scale=0.25]{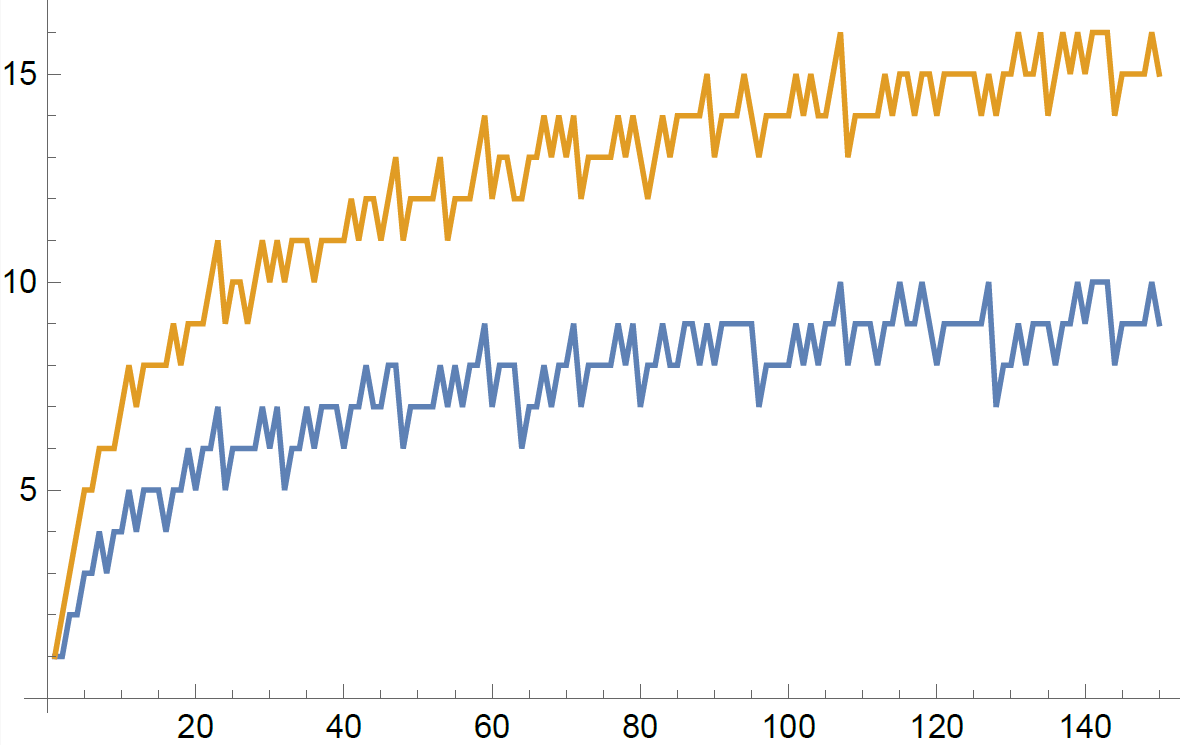}
\caption{\label{Figure1} 
 An illustration of the behavior of $\| \cdot \|$ and of $\| \cdot \|_{2}$, 
 where the lower graph corresponds to $\| \cdot \|_{2}$. } 
\end{center}
\end{figure}

 We let $\log_b(x)$ denote the base-$b$ logarithm of $x$. Arias de Reyna \cite{AriasdeReyna2024} proved that $\log_{2}(n+1) \leq \| n \|$ for all 
 natural numbers $n$. We find that it is not, in general, the case that $\log_{2}(n+1) \leq \| n \|_{2}$, with counterexamples appearing for integers of 
 the form $n = 2^{m} - 1$. 

\begin{theorem}\label{theoremlower}
 The lower bound such that $\log_{2}(n) \leq \| n \|_{2} $ holds for $n \in \mathbb{N}$. 
\end{theorem}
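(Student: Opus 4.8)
The plan is to induct on $n$, using the recursion for $\|\cdot\|_2$ from Equation \eqref{recusivegeneralization} together with the sub-additivity relations in Equation \eqref{basicinequalitiesbinary}. The base cases are handled directly: for $n \in \{1,2\}$ one checks $\log_2(1) = 0 \leq 1 = \|1\|_2$ and $\log_2(2) = 1 \leq 2 = \|2\|_2$ (and a few more small cases if needed to kick-start the recursion, since the recursion is only asserted for $n \geq 3$). For the inductive step with $n \geq 3$, fix an optimal expression for $n$ realizing $\|n\|_2$; its outermost operation is either a product or a sum.

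First I would treat the multiplicative case. If $n = d \cdot (n/d)$ with $1 < d < n$ achieving the minimum, then by the recursion $\|n\|_2 = \|d\|_2 + \|n/d\|_2$. By the induction hypothesis applied to the strictly smaller numbers $d$ and $n/d$, this is at least $\log_2(d) + \log_2(n/d) = \log_2(n)$, as desired. The additive case is where the real work lies: if $n = \eta + (n - \eta)$ with $0 < \eta < n$, then $\|n\|_2 \geq \log_2(\eta) + \log_2(n - \eta)$, and I would need $\log_2(\eta) + \log_2(n-\eta) \geq \log_2(n)$, i.e.\ $\eta(n-\eta) \geq n$. This fails when $\eta = 1$ (or $\eta = n-1$): there $\eta(n-\eta) = n-1 < n$. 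So the plan is to handle the ``$+1$'' splits separately and show the inequality is not actually lost there, while for all other splits ($2 \leq \eta \leq n-2$) one has $\eta(n-\eta) \geq 2(n-2) \geq n$ whenever $n \geq 4$, closing those cases immediately.

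The main obstacle, then, is the split $n = (n-1) + 1$. Here $\|n\|_2 = \|n-1\|_2 + \|1\|_2 = \|n-1\|_2 + 1$. The induction hypothesis only gives $\|n-1\|_2 \geq \log_2(n-1)$, and $\log_2(n-1) + 1 = \log_2(2(n-1)) \geq \log_2(n)$ holds as soon as $2(n-1) \geq n$, i.e.\ $n \geq 2$ — so in fact this case is fine too, with room to spare. The genuinely delicate point is therefore not any single algebraic inequality but making sure the induction is set up so that \emph{every} way the optimal expression for $n$ can decompose is covered: one must observe that an optimal expression's top-level structure is captured by the recursion \eqref{recusivegeneralization}, so that $\|n\|_2$ equals one of the listed quantities $\|d\|_2 + \|n/d\|_2$ or $\|\eta\|_2 + \|n-\eta\|_2$, and in each instance the two summands correspond to strictly smaller arguments (using $d \neq 1, n$ and $0 < \eta < n$), so the induction hypothesis applies. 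I would close by collecting the three cases — nontrivial factorization, ``interior'' additive split, and unit additive split — verifying the elementary inequality $\log_2(a) + \log_2(b) \geq \log_2(a+b)$ whenever $ab \geq a+b$, and noting the unit split satisfies $\log_2(n-1) + 1 \geq \log_2 n$ for $n \geq 2$.
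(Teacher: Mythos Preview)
Your approach is essentially the same as the paper's: strong induction via the recursion \eqref{recusivegeneralization}, handling the multiplicative case directly by $\log_2 d+\log_2(n/d)=\log_2 n$, and splitting the additive case into the unit split $\eta=1$ (where $\|n-1\|_2+1\geq\log_2(n-1)+1\geq\log_2 n$) versus interior splits $2\leq\eta\leq n-2$ (where $\eta(n-\eta)\geq n$). One small slip: $\|2\|_2=1$, not $2$, since $2$ is already a single selection from $\{1,2\}$; the base-case inequality then reads $1\leq 1$ and still holds, so the argument is unaffected.
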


\begin{proof}
 The base case for $n = 1$ is equivalent to $0 \leq 1$. Inductively, we assume that the desired inequality holds for natural numbers $m \leq n$. We find that 
 the $n = 2$ case is equivalent to $1 \leq 1$, and that the $n = 3$ case is equivalent to $1.58496\ldots \leq 2$, so that the desired implications hold for $n 
 \leq 2$. For $n > 2$, the recursive version of Equation \eqref{recusivegeneralization} gives us that the expression $\| n + 1 \|_{2}$ is either equal to $\| m 
 \|_{2} + \| n + 1 - m \|_{2}$ for some integer $1 \leq m \leq \frac{n +1}{2} $ or is equal to to $\| d \|_{2} + \left\| \frac{n+1}{d} \right\|_{2}$ for 
 some divisor $d$ such that $1 < d \leq \sqrt{n + 1}$. First suppose the first case holds, with $m = 1$. From our inductive hypothesis, we thus have that 
 $1 + \log_{2}(n) \leq \| n + 1 \|_{2}$, so that $\log_{2}(n+1) \leq \| n + 1 \|_{2}$, as desired. Now, suppose that $m > 1$. By the inductive hypothesis, 
 we have that $\log_{2}(m) + \log_{2}(n + 1 - m) \leq \| m \|_{2} + \| n + 1 - m \|_{2}$. Since $ n + 1 -m \geq \frac{n +1}{2}$, we have that if $ n + 1 -m 
 = 1$, then $n = 1$, but, since $m > 1$, we have that $ n + 1 -m < n = 1$, yielding a contradiction. So, we have that $m > 1$ and $ n + 1 - m > 1$, which 
 gives us that $\log_{2}(m + (n + 1 - m)) \leq \log_{2}(m) + \log_{2}(n+1-m) \leq \| m \|_{2} + \| n + 1 - m \|_{2}$, so that $\log_{2}(n + 1) \leq \| n + 
 1 \|_{2}$, as desired. A similar argument may be applied with respect to the remaining case whereby $\|n+1\|_{2}$ is 
 of the form $\| d \|_{2} + \left\| \frac{n + 1}{d} \right\|_{2}$, since 
 $$\| n + 1 \|_{2} = \| d \|_{2} + \left\| \frac{n+1}{d} \right\|_{2} 
 \geq \log_{2} d + \log_{2}\left( \frac{n+1}{d} \right) = \log_{2}(n+1). $$
 \end{proof}

 The length $\ell(n)$ of a shortest addition chain ending in $n$ satisfies 
\begin{equation}\label{boundsl}
 \log_{2}(n) \leq \ell(n) \leq \lfloor \log_{2}(n) \rfloor + \nu_{2}(n) - 1, 
\end{equation}
 where $\nu_{2}(n)$ denotes the number of ones in the base-2 digit expansion of $n$; see \cite[\S1]{AltmanPhD} and references therein. In view of the 
 close relationship between $\ell(n)$ and $\| n \|_{2}$, this motivates our devising an analogue of the upper bound in \eqref{boundsl} for $\| \cdot \|_{2}$. 

\begin{theorem}\label{theoremupperbound}
 The upper bound such that $\| n \|_{2} \leq \lfloor \log_{2}(n) \rfloor
 + \nu_{2}(n) - 1$ holds for $n \geq 2$. 
\end{theorem}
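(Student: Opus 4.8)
The plan is to induct on $n$, using the binary (Horner) expansion of $n$ in a way that parallels the standard proof of the upper bound for $\ell(n)$ recalled in \eqref{boundsl}. The single idea that makes the bound come out as $\lfloor\log_2 n\rfloor + \nu_2(n) - 1$, rather than with $+\nu_2(n)$, is that one copy of the symbol $2$ already expresses the number $2$: the leading doubling in the binary method therefore costs nothing extra, and this is precisely what produces the ``$-1$''.

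In more detail, I would take $n = 2$ as the only base case, noting that $\|2\|_{2} = 1$ equals $\lfloor\log_2 2\rfloor + \nu_2(2) - 1$. For the inductive step I would assume the bound for all integers in $\{2,\dots,n-1\}$ and split on the parity of $n\ge 3$. If $n$ is even (so $n\ge 4$), apply the multiplicative inequality in \eqref{basicinequalitiesbinary} to the factorization $n = 2\cdot(n/2)$ together with $\|2\|_{2} = 1$ to obtain $\|n\|_{2} \le \|n/2\|_{2} + 1$, then invoke the inductive hypothesis for $n/2$ and the identities $\lfloor\log_2(n/2)\rfloor = \lfloor\log_2 n\rfloor - 1$ and $\nu_2(n/2) = \nu_2(n)$. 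If $n$ is odd, apply the additive inequality in \eqref{basicinequalitiesbinary} to $n = (n-1) + 1$ together with $\|1\|_{2} = 1$ to obtain $\|n\|_{2} \le \|n-1\|_{2} + 1$, then use the inductive hypothesis for the even number $n-1 \ge 2$ along with $\nu_2(n-1) = \nu_2(n) - 1$ and $\lfloor\log_2(n-1)\rfloor = \lfloor\log_2 n\rfloor$. In each case the arithmetic collapses exactly to the claimed inequality.

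Alternatively, and this is essentially the same argument unrolled, one can exhibit the decomposition directly. Writing $n = 2^{e_0} + 2^{e_1} + \cdots + 2^{e_{k-1}}$ with $e_0 > e_1 > \cdots > e_{k-1} \ge 0$, so that $e_0 = \lfloor\log_2 n\rfloor$ and $k = \nu_2(n)$, one runs Horner's scheme from the top bit downward but starts from the single symbol $2$ (rather than from $1\cdot 2$): this requires $e_0 - 1$ further multiplications by $2$ and $k-1$ additions of $1$, giving a total symbol count of $1 + (e_0 - 1) + (k-1) = \lfloor\log_2 n\rfloor + \nu_2(n) - 1$, and one checks that the resulting expression uses only legal nestings.

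The proof is not hard, and the main point requiring care is the bookkeeping for $\lfloor\log_2(\cdot)\rfloor$ and $\nu_2(\cdot)$ in the odd case: verifying $\lfloor\log_2(n-1)\rfloor = \lfloor\log_2 n\rfloor$ for odd $n\ge 3$ calls for noticing that this holds both generically and in the boundary case $n = 2^k + 1$, where $n-1 = 2^k$ is itself a power of two. Everything else reduces to a routine check of the base case and of the two parity subcases.
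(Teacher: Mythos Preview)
Your proposal is correct and follows essentially the same inductive approach as the paper: split on parity and use $n = 2\cdot(n/2)$ in the even case and the binary step $n = (n-1)+1$ in the odd case, with the paper merely compressing your two odd-case moves into the single step $n+1 = 2\cdot(n/2)+1$. A cosmetic difference is that you track $\lfloor\log_2(\cdot)\rfloor$ throughout (hence your explicit check that $\lfloor\log_2(n-1)\rfloor=\lfloor\log_2 n\rfloor$ for odd $n$), whereas the paper proves the bound with $\log_2$ and applies the floor only at the end.
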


\begin{proof}
 The $n = 2$ case is equivalent to $1 \leq 1$. Assume, inductively, that $\| m \|_{2} \leq \log_{2}(m) + \nu_{2}(m) - 1$ holds for for natural numbers 
 $2 \leq m \leq n$. The $n = 3$ case is equivalent to $2 \leq 2$, so that the desired implication holds for $n = 2$. Now, set $n > 2$. If $n+1$ is even, we 
 obtain that 
\begin{align}
\begin{split}
 \| n + 1 \|_{2} 
 & \leq \| 2 \|_{2} + \left\| \frac{n+1}{2} \right\|_{2} \\
 & = 1 + \left\| \frac{n+1}{2} \right\|_{2}. 
\end{split}\label{splitupper}
\end{align}
 Since $n > 2$, we have that $2 \leq \frac{n+1}{2} \leq n$, so that, from the inductive hypothesis and from 
 \eqref{splitupper}, we find that $\| n + 1 \|_{2} \leq 1 + \log_{2}\left( \frac{n+1}{2} \right) + \nu_{2}\left( \frac{n+1}{2} \right) -1$, so that $\| n 
 + 1 \|_{2} \leq \log_{2}\left( n+1 \right) + \nu_{2}\left( n+1 \right) - 1$. 
 Now, suppose that $n + 1$ is odd. Since $n > 3$, 
 we have that 
 $n \geq 4$, with $2 \leq \frac{n}{2} \leq n$. We then find that $\| n + 1 \|_{2} \leq 2 + \left\| \frac{n}{2} \right\|_{2}$, so that 
 $\| n + 1 \|_{2} \leq 2 + \log_{2}\left( \frac{n}{2} \right) + \nu_{2}\left( \frac{n}{2} \right) - 1$. This, in turn, gives us that $\| n + 1 \|_{2} \leq 
 \log_{2}\left( n \right) + \nu_{2}\left( n \right)$, and it follows in a direct way that $\| n + 1 \|_{2} \leq \log_{2}\left( n + 1 \right) + \nu_{2}\left( n 
 +1 \right) -1$. We have showed that $\| n \|_{2} \leq \log_{2}(n) + \nu_{2}(n) - 1$ holds for $n \geq 2$, and, by taking the integer parts of both 
 sides of the given inequality, we obtain the desired result. 
\end{proof}

\section{Toward the Determination of an Asymptotic Formula}
 In view of the close relationship between binary decompositions and addition chains, it is surprisingly difficult to prove an analogue for the binary 
 complexity function of Brauer's 1939 formula $\ell(n) \sim \log_{2}(n)$ \cite{Brauer1939}. We describe two approaches toward this problem below. 

\subsection{Toward an Analogue of Brauer's Proof}
 To mimic Brauer's proof of 
 $\ell(n) \sim \log_{2}(n)$ \cite{Brauer1939}, 
 we would want to prove the following Proposition, 
 as the desired equivalence 
\begin{equation}\label{desiredsimn2}
 \| n \|_{2} \sim \log_{2}(n)
\end{equation}
 would follow in a direct way if Conjecture \ref{conjectureBraueranalogue} below holds true. 

\begin{conjecture}\label{conjectureBraueranalogue}
 Let $r$ be a positive integer and let $s$ be a nonnegative integer, and let $r$ and $s$ be such that it is not the case that both $r = 1$ and $s = 0$. 
 Then $$ \| n \|_{2} \leq (r+1) s + 2^{r} - 2 $$ for $2^{rs} \leq n < 2^{r(s+1)} $ (cf.\ \cite{Brauer1939}). 
\end{conjecture}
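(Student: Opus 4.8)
The plan is to imitate Brauer's $2^{r}$-ary (``window'') method for addition chains, inducting on $s$ via the recursion in Equation~\eqref{recusivegeneralization}. Fix $r$ and write $B = 2^{r}$. The case $r = 1$ is immediate from Theorem~\ref{theoremupperbound}, since for $2^{s} \le n < 2^{s+1}$ one has $\lfloor \log_{2}(n) \rfloor = s$ and $\nu_{2}(n) \le s + 1$, so $\| n \|_{2} \le s + (s+1) - 1 = 2s = (r+1)s + 2^{r} - 2$; assume henceforth $r \ge 2$. The base case $s = 0$ (so $1 \le n < B$) again follows from Theorem~\ref{theoremupperbound}: for $n \ge 2$ we get $\| n \|_{2} \le \lfloor \log_{2}(n) \rfloor + \nu_{2}(n) - 1 \le 2 \lfloor \log_{2}(n) \rfloor \le 2(r-1) \le 2^{r} - 2$, and $\| 1 \|_{2} = 1 \le 2^{r} - 2$.

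For the inductive step, expand $n$ in base $B$ as $n = \sum_{i=0}^{s} d_{i} B^{i}$ with $d_{i} \in \{ 0, 1, \ldots, B - 1 \}$ and $d_{s} \ge 1$, and evaluate by Horner's scheme, realizing each multiplication by $B$ as a product $2 \times 2 \times \cdots \times 2$ of $r$ twos, which contributes exactly $r$ to the token count (and no fewer, by Theorem~\ref{theoremlower}). Peeling off the base-$B$ blocks one at a time against the inductive hypothesis (the quotient $\lfloor n / B \rfloor$ lies in $[\, 2^{r(s-1)}, 2^{rs} \,)$ for $s \ge 1$) gives
\[
 \| n \|_{2} \;\le\; r s + \sum_{i \,:\, d_{i} \neq 0} \| d_{i} \|_{2}.
\]
Because the target is $(r+1)s + 2^{r} - 2 = rs + ( s + 2^{r} - 2 )$, the proof reduces to the \emph{digit-sum inequality} $\sum_{i \,:\, d_{i} \neq 0} \| d_{i} \|_{2} \le s + 2^{r} - 2$. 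When every digit lies in $\{1, 2\}$ this is clear, the left-hand side being at most $s + 1 \le s + 2^{r} - 2$; the whole difficulty is concentrated in the digits $d_{i} \ge 3$.

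This last inequality is exactly where the analogy with addition chains is delicate, and I expect it to be the main obstacle: Brauer builds the table $1, 2, \ldots, 2^{r} - 1$ once, at total cost $2^{r} - 2$, after which each occurrence of a digit costs a single addition, whereas a Mahler--Popken-style formula admits no sharing of subexpressions, so a single base-$B$ digit $d$ must be paid for in full, at cost $\| d \|_{2}$ (which can be of order $2r - 2$), every time it occurs --- and a large digit repeated $k$ times already overshoots the budget. Thus the digit-sum inequality cannot be established digit by digit. For instance, the all-ones base-$B$ string is $n = B^{s+1} - 1 = 2^{r(s+1)} - 1$, for which naive Horner is far too expensive, yet factorizations such as $2^{ab} - 1 = (2^{a} - 1)(1 + 2^{a} + \cdots + 2^{a(b-1)})$ combined with Theorem~\ref{theoremupperbound} do meet the bound.

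The repair I would attempt is to process the base-$B$ expansion not digit by digit but by \emph{maximal constant runs}: a run of $k$ consecutive copies of a digit $d$ contributes a factor $d \cdot R_{k}$, where $R_{k} = ( B^{k} - 1 ) / ( B - 1 ) = 1 + B + \cdots + B^{k-1}$ is a base-$B$ repunit, so that $d$ is charged only once per run while $R_{k}$ (equivalently $B^{k} - 1$) is controlled by a separate auxiliary upper bound for $\| B^{k} - 1 \|_{2}$, proved by its own induction from Equation~\eqref{recusivegeneralization}, e.g.\ by iterating $B^{2j} - 1 = ( B^{j} - 1 )( B^{j} + 1 )$ together with Theorem~\ref{theoremupperbound}. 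The remaining, genuinely delicate, step is then to balance the accumulated per-run costs against $(r+1)s + 2^{r} - 2$ uniformly over all digit patterns; and I would note that any weaker bound of the shape $\| n \|_{2} \le (r+1)s + O(2^{r})$ already suffices to deduce the asymptotic $\| n \|_{2} \sim \log_{2}(n)$ of \eqref{desiredsimn2}, so establishing that weakened form first is the route I would take if the sharp constant $2^{r} - 2$ proves too stubborn.
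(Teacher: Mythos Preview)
The statement is a \emph{Conjecture} in the paper and is left open: the paper attempts to mimic Brauer's inductive scheme, shows exactly where the argument overshoots, and concludes that ``we leave it as an open problem to prove Conjecture~\ref{conjectureBraueranalogue}.'' So there is no proof against which to match your proposal; one can only compare your attempt with the paper's \emph{failed} attempt.

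The two attempts organise the induction on $s$ differently. The paper carries the ``digit table'' $1,\,1+1,\,\ldots,\,2^{r}-1$ explicitly as a sum inside an innermost bracket of a decomposition of $a=\lfloor n/2^{r}\rfloor$, then tries to split off the required digit $b$ from that sum; this forces a duplication of the outer multiplicand $C$, and the resulting length exceeds the target by $r+\ell(C)$. You instead run Horner's scheme to obtain the clean upper bound $\|n\|_{2}\le rs+\sum_{d_{i}\ne 0}\|d_{i}\|_{2}$ and isolate the difficulty as a digit-sum inequality. You correctly observe that this inequality is not provable digit by digit; in fact it is simply \emph{false} in general (for $r=2$ with every digit equal to $3$ the left side is $2(s+1)$ against a right side of $s+2$), so your reduction is not to a true statement. (A small slip: the base-$B$ expansion of $B^{s+1}-1$ has every digit equal to $B-1$, not $1$; the ``all-ones'' string is the repunit $R_{s+1}$.) The underlying obstruction is exactly the one the paper names --- the Mahler--Popken formalism permits no sharing of subexpressions, so each digit must be paid for every time it occurs.

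Your ``maximal constant runs'' repair is only a sketch: you do not supply the auxiliary bound on $\|B^{k}-1\|_{2}$, and the balancing of per-run costs against $(r+1)s+2^{r}-2$ over arbitrary digit patterns is not carried out --- and those are precisely the hard steps. So your proposal, like the paper's own attempt, stops short of a proof. Your final remark, that any bound of the shape $(r+1)s+O(2^{r})$ already delivers $\|n\|_{2}\sim\log_{2}(n)$, is correct and is a direction the paper does not pursue (the paper instead proposes bounding $\|n\|_{2}-\ell(n)$ directly); that weakening may well be the more tractable target.
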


 Informally, the main problem when it comes to proving the purported property highlighted as Conjecture \ref{conjectureBraueranalogue} may be explained 
 as follows. Given an addition chain, it is always possible to increase the length by $1$ by forming a new addition chain by adding \emph{any} previous 
 entry to the last entry, so as to form a new entry. However, it is not, in general, possible to follow this approach with the use of binary decompositions: 
 Informally, a combination $c$ of ones and twos may be ``buried'' within nested brackets, 
 so it would not necessarily be possible to simply multiply $c$ by $2$. 
 This is clarified below. 

 For the $r = 0$ base case, the desired inequality $\| n \|_{2} \leq 2 s$ holding for $2^{s} \leq n < 2^{s+1}$ follows from Theorem 
 \ref{theoremupperbound}. Mimicking Brauer's approach \cite{Brauer1939}, we claim that it is possible to form a binary decomposition of $n$ of length 
 at most $(r+1) s + 2^r - 2$ such at an innermost pair of brackets contains a summation with 
\begin{equation}\label{underbracecontains}
 \underbrace{1, 1+1, \ldots, 1+1+1+\cdots+1}_{\text{$2^r - 1$ terms}}
\end{equation}
 as its consecutive partial sums. Disregarding the trivial $(r, s) = (1, 0)$ case, we find that the $s = 0$ case holds, by finding that the last term in 
 \eqref{underbracecontains} evaluates as $2^{r} - 1$, with the inequalities $2^{rs} \leq n < 2^{r(s+1)}$ reducing to $1 \leq n < 2^{r}$, so that it is 
 possible to form a binary decomposition by taking the partial sum in \eqref{underbracecontains} equal to $n \in \{ 1, 2, \ldots, 2^{r} - 1 \}$. As in 
 Brauer's proof, we proceed by contradiction, and assume that the above claim does not hold, and we set $s$ as the smallest integer such that the claim is 
 not satisfied, again for $n$ such that $2^{rs} \leq n < 2^{r(s+1)}$. Dividing $n$ by $2^{r}$, writing 
\begin{equation}\label{dividingn2r}
 n = a \cdot 2^{r} + b 
\end{equation}
 for 
\begin{equation}\label{0leqbpower}
 0 \leq b < 2^{r}
\end{equation}
 and $2^{r(s-1)} \leq a < 2^{rs}$, 
 the claim holds for $s - 1 < s$ from our assumption. 
 So, there is a binary decomposition, which we denote as $B$, of $a$ of length at most 
\begin{equation}\label{Blengthatmost}
 (r + 1) (s - 1) + 2^{r} - 2, 
\end{equation}
 with \eqref{underbracecontains} as a sequence of partial sums appearing in an innermost bracket. That is, the expression $B$ refers to the value $a$, with 
 the understanding that $B$ is expressed as a combination of ones and twos according to the operational symbols specified in Definition 
 \ref{definitionmary}, so that the total number of ones and twos in $B$ is at most \eqref{Blengthatmost}, and with \eqref{underbracecontains} 
 appearing as specified. Suppose that $b > 0$. Then, from \eqref{0leqbpower}, the specified sequence of partial sums contains $b$. We expand $B$ 
 by expanding an innermost sum of the form $$ \underbrace{1+1+1+\cdots+1}_{\text{$2^r - 1$ terms}} $$ according to a partition into a sum of $b$ 
 terms and a sum of $2^{r} - 1 - b$ terms. 

 Let us write $B$ in the form 
\begin{equation}\label{writeBform}
 B = C \times \left( b + 2^r - 1 - b \right) + D 
\end{equation}
 for binary decompositions $C$ and $D$, with 
 $$ \ell(B) = 2^r - 1 + \ell(C) + \ell(D), $$
 letting it be understood that $b$ refers to an expressions of the form $1 + 1 + \cdots + 1$
 summing to $b$, and similarly for the term $2^{r} - 1 - b$. 
 We then rewrite the binary decomposition of $a$ in Equation \eqref{writeBform} as 
 $$ C \times b + C \times (2^{r} - 1 - b) + D. $$ 
 We then form a binary decomposition in the manner suggested as follows: 
\begin{align*}
 & \underbrace{2 \times 2 \times \cdots \times 2}_{r} \times C \times b + \\
 & \underbrace{2 \times 2 \times \cdots \times 2}_{r} \times C \times (2^{r} - 1 - b) + D. 
\end{align*}
 This gives us a binary decomposition of $2^{r} a$. We then obtain 
 a binary decomposition of Equation \eqref{dividingn2r} in the manner suggested below: 
\begin{align*}
 & \left( \underbrace{2 \times 2 \times \cdots \times 2}_{r} \times C + 1 \right) \times b + \\
 & \underbrace{2 \times 2 \times \cdots \times 2}_{r} \times C \times (2^{r} - 1 - b) + D. 
\end{align*}
 We find that this is of length 
\begin{align*}
 & r + \ell(C) + 1 + b + \\ 
 & r + \ell(C) + (2^{r} - 1 - b) + \ell(D), 
\end{align*}
 which we rewrite as 
\begin{align*}
 & 2 r + \ell(C) + 1 + \ell(B), 
\end{align*}
 which is less than or equal to 
\begin{align*}
 & 2 r + \ell(C) + 1 + (r + 1) (s - 1) + 2^{r} - 2, 
\end{align*}
 By analogy with Brauer's proof, we would want this to be less than or equal to 
 $(r + 1) s + 2^r - 2$, 
 but this desired property is equivalent to $0 \leq -\ell(C) - r$, which does not hold. 

 Numerical evidence suggests that Conjecture \ref{conjectureBraueranalogue} holds, but, in view of the above difficulties concerning devising an analogue 
    of Brauer's   
 proof, we leave it as an open problem to prove Conjecture \ref{conjectureBraueranalogue}. 

\subsection{Addition Chain Lengths and Binary Decomposition Lengths}
 Using the available data in this OEIS entry {\tt A003313} together with our recursion for $\| \cdot \|_{2}$, we have computed $\| n \|_{2} - \ell(n)$ for 
 $n \leq 10000$. For example, all of the values in $( \| n \|_{2} - \ell(n) : n \in \mathbb{N}_{\leq 10000})$ are in $\{ 0, 1, 2, 3 \}$, with $1$ entry equal 
 to $3$. This suggests that it may be possible to determine appropriate bounds for a function $f(n)$ such that $\| n \|_{2} - \ell(n) = f(n)$, in such a 
 way so that $f(n)$ is bounded above by an elementary function in such a way that the purported equivalence in \eqref{desiredsimn2} would be 
 immediate from Brauer's equivalence. 

 Informally, given a binary decomposition $B$, we could form an addition chain by recording consecutive partial sums appearing within an innermost pair 
 of brackets, and by then continuing in a recursive fashion according to the combination of ones and twos and operational symbols appearing outside of 
 this pair of brackets. Formalizing this notion could lead to a proof that $\ell(n) \leq \| n \|_{2}$. Conversely, given an addition chain $A$, what would 
 be an appropriate procedure to form a corresponding binary decomposition, in such a way to show that $\| n \|_{2} - \ell(n)$ is bounded above by an 
 elementary function? 

\section{Conclusion}
 We consider some further areas to explore related to $\| \cdot \|_{m}$. 

 To begin with, we encourage a full exploration of complexity functions of the form $\|\cdot\|_{m}$ for $m \in \mathbb{N}_{\geq 2}$, in addition to the 
 material above concerning the $m = 2$ case. 

 Recall the integer subsequence in \eqref{notequallength} providing successive indices such that $\| n \|_{2} \neq \ell(n)$. 
 How can this sequence be evaluated in an explicit way? What can be determined in regard to growth properties of this 
 sequence? How can the addition chains corresponding to the indices in \eqref{notequallength} be characterized without explicit or direct 
 reference to $\|\cdot\|_{2}$? 

 By analogy with the correspondence between $\|\cdot\|_{2}$ and shortest addition chains suggested in Example \ref{shortest20}, what would be an 
 appropriate unary or ternary analogue of the definition of an addition chain, in relation to the complexity functions $\|\cdot\|$ and $\|\cdot\|_{3}$? 

 Instead of studying properties of sequences of the form $(\| n \|_{m} : n \in \mathbb{N} )$ 
 for fixed $m \in \mathbb{N}$, 
 how could analogous properties be determined for sequences of the form 
 $(\| n \|_{m} : m \in \mathbb{N} )$ for fixed $n \in \mathbb{N}$? 

 Recalling the lower bound for $\| \cdot \|$ shown in \eqref{boundsoriginal}, the \emph{integer defect} $\delta(n)$ of a natural number was defined by 
 Altman and Zelinsky as $\| n \| - 3 \log_{3}(n)$ \cite{AltmanZelinsky2012}. Altman's explorations of integer defects as in \cite{Altman2016,Altman2019} 
 motivate analogous explorations of the function given by $\| n \|_{2} - \log_{2}(n)$. 

 Recall the close connection indicated in Section \ref{sectionAddition} between $\| n \|_{2}$ and $\ell(n)$. An important \cite[\S1]{AltmanPhD} open 
 question concerning addition chains is given by the \emph{Scholz--Brauer conjecture}, according to which we would have that 
 $\ell(2^{n} - 1) \leq n + \ell(n) - 1$. 
 Could our new complexity function $\| \cdot \|_{2}$ be used to give light to the Scholz--Brauer conjecture?

\vskip20pt\noindent {\bf Acknowledgements.}
 The author is grateful to acknowledge support from a Killam Postdoctoral Fellowship from the Killam Trusts. 
 The author is thankful to Karl Dilcher for very engaging and very useful discussions concerning this paper.

\end{document}